\newtheorem{theorem}{Theorem}
\newtheorem{lemma}[theorem]{Lemma}
\newtheorem{corollary}[theorem]{Corollary}
\newtheorem{definition}[theorem]{Definition}
\begin{document}

\title{Generalizations of product-free subsets}
\author{Kiran S. Kedlaya}
\address{Department of Mathematics, Massachusetts Institute of Technology,
77 Massachu\-setts Avenue, Cambridge, MA 02139}
\email{kedlaya@mit.edu}
\urladdr{http://math.mit.edu/~kedlaya/}

\author{Xuancheng Shao}
\address{410 Memorial Drive Room 211A, Cambridge, MA 02139}
\email{zero@mit.edu}

\thanks{The first author was supported by NSF CAREER grant DMS-0545904 and a Sloan Research Fellowship. The second author was supported by the Paul E. Gray (1954) Endowed Fund for the MIT Undergraduate Research Opportunities Program.}
\subjclass[2000]{Primary 20D60; secondary 20P05.}

\begin{abstract}

In this paper, we present some generalizations of Gowers's result about product-free subsets of groups. For any group $G$ of order $n$, a subset $A$ of $G$ is said to be product-free if there is no solution of the equation $ab=c$ with $a,b,c\in A$. Previous results showed that the size of any product-free subset of $G$ is at most $n/\delta^{1/3}$, where $\delta$ is the smallest dimension of a nontrivial representation of $G$. However, this upper bound does not match the best lower bound. We will generalize the upper bound to the case of {\em product-poor} subsets $A$, in which the equation $ab=c$ is allowed to have a few solutions with $a,b,c \in A$. We prove that the upper bound for the size of product-poor subsets is asymptotically the same as the size of product-free subsets. We will also generalize the concept of product-free to the case in which we have many subsets of a group, and different constraints about products of the elements in the subsets.

\end{abstract}

\maketitle

\section{Background}

Let $G$ be a group. A subset $S$ of $G$ is {\em product-free} if there do not exist $a,b,c \in S$ (not necessarily distinct) such that $ab=c$. 
One can ask about the existence of large product-free subsets for various groups, such as the groups of integers or compact topological groups.
Assume that $G$ is a finite group of order $n>1$. Let $\alpha (G)$ denote the size of the largest product-free subset of $G$, and $\beta (G) = \alpha (G) /n$.
The purpose is to find good bounds for $\alpha (G)$ and $\beta (G)$ as a function of $G$, or as a function of $n$.

If $G$ is abelian, this problem was solved by Green and Ruzsa in 2005 \cite{Green05}. They gave an exact value of $\alpha (G)$ as a function of some characteristics of the abelian group $G$. However, the problem is much harder for nonabelian $G$. The first appearance of the problem of computing $\alpha(G)$ for nonabelian $G$ seems to have been in a 1985 paper \cite{Babai85}. The construction of product-free subsets given in \cite{Babai85} is quite simple: if $H$ is a proper subgroup of $G$, then any nontrivial coset of $H$ is product-free. Therefore, a lower bound for $\beta(G)$ can be derived: $\beta(G) \geq m^{-1}$, where $m$ is the index of the largest proper subgroup of $G$.
In 1997, Kedlaya \cite{Kedlaya97} improved this bound to $cm^{-1/2}$ for some constant $c$ by showing that if $H$ has index $k$ then one can in fact find a union of $ck^{1/2}$
cosets of $H$ that is product-free. This gives the best known lower bound on $\alpha(G)$ for general $G$.

\begin{theorem}
\label{thm:low}

$\beta(G) \geq cm^{-1/2}$ for some constant $c$.

\end{theorem}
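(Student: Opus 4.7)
The plan is to construct the product-free set as a union of left cosets of a subgroup $H\leq G$ realizing the minimum index $m$. Since each coset has size $|H|=n/m$, finding $\lceil c\sqrt{m}\,\rceil$ cosets whose union is product-free yields $|A|\geq cn/\sqrt m$, which is the claim. Fix coset representatives $g_1,\dots,g_m$ for $G/H$ and, for $S\subseteq\{1,\dots,m\}$, set $A_S=\bigsqcup_{i\in S}g_iH$. The first step is to reformulate product-freeness as a combinatorial condition on $S$: writing $P(i,j):=\{l:g_lH\subseteq g_iHg_jH\}$, a subset of $\{1,\dots,m\}$ of size at most $[H:H\cap g_jHg_j^{-1}]\leq|H|$, the set $A_S$ is product-free if and only if $S\cap P(i,j)=\emptyset$ for all $i,j\in S$. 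This casts the task as finding a large independent set in a 3-uniform hypergraph on $\{1,\dots,m\}$ with at most $m^2|H|=mn$ hyperedges.

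Next, I would attempt a probabilistic argument with alteration: pick a random $S_0$ of size slightly larger than $c\sqrt m$ and delete one index from every violating triple. The main obstacle is that the crude bound $|P(i,j)|\leq|H|$ yields, via the expected-number-of-bad-triples calculation, only an independent set of size $\Omega(m/\sqrt n)$, which is too weak whenever $|H|$ is large. Sharpening the conclusion to $\sqrt m$ requires exploiting the algebraic rigidity of the $P(i,j)$, which are controlled by the double-coset decomposition of $G$ by $H$ rather than being generic.

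My plan at this stage is to make a structured choice of coset representatives. If $G$ contains an element $g$ whose orbit on $G/H$ has length $\Omega(m)$, I would take $g_i=g^i$ along the orbit; the bad-triple condition then reduces to an additive equation in a cyclic group, and classical Sidon-set constructions in a cyclic group of order $\Omega(m)$ supply $S$ with $|S|=\Omega(\sqrt m)$, which is the desired size. Absent a suitable cyclic orbit, I would fall back on a case analysis based on the permutation representation $G\to S_m$ coming from the action on $G/H$ and, in particular, on the structure of $G$ modulo the core of $H$, treating separately the cases where this quotient is small (and reduces to an analysis of a transitive subgroup of $S_m$) or large (where enough abelian structure on $G/\!\operatorname{core}(H)$ allows a Green--Ruzsa style construction to be pulled back to $G$).
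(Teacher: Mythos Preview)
The paper does not prove Theorem~\ref{thm:low}; it is quoted from \cite{Kedlaya97} as background, so there is no in-paper argument to compare your proposal against. On its own merits, your plan sets up the coset framework correctly and correctly diagnoses why naive deletion on the coset hypergraph falls short, but the proposed remedies do not close the gap.

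The Sidon-set reduction is not valid in general. With $g_i=g^i$, the condition $g^lH\subseteq g^iHg^jH$ is equivalent to $g^{\,l-i}\in Hg^jH$, which depends on the full double coset $Hg^jH$, not merely on $l-i-j$; it collapses to a purely additive condition only when $g^j$ normalizes $H$. For a minimum-index subgroup $H$ that is maximal and not normal one has $N_G(H)=H$, so no nontrivial such $g$ exists.

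More decisively, the entire ``union of cosets of $H$'' framework fails outright whenever $G$ acts $2$-transitively on $G/H$. In that case $HgH=G\setminus H$ for every $g\notin H$, so for any $T\subseteq\{2,\dots,m\}$ with $|T|\ge 2$ and any distinct $i,l\in T$ and $j\in T$, there is $a\in G$ with $a(1)=i$ and $a(j)=l$; hence $A_T$ is never product-free once $|T|\ge 2$. This already covers $G=\mathrm{PSL}_2(q)$ for large prime $q$, where the Borel subgroup realizes the minimum index $m=q+1$, the core of $H$ is trivial, and $G$ is simple---so neither a long cyclic orbit in $N_G(H)/H$ nor your Green--Ruzsa fallback (which presupposes abelian structure on $G/\operatorname{core}(H)$) is available. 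For such groups your scheme produces only a single coset, density $1/m$, not the claimed $c/\sqrt m$.

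So the proposal, as written, does not prove the theorem; handling the $2$-transitive regime requires a genuinely different ingredient beyond selecting cosets of the minimum-index subgroup.
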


On the other hand, Gowers recently established a nontrivial upper bound for $\alpha(G)$ using a remarkably simple argument \cite{Gowers} (see also
\cite{kedlaya2007pfs} in this volume). The strategy of Gowers
is to consider
three sets $A,B,C$ for which there is no solution of the equation $ab = c$ with $a \in A$, $b \in B$, $c \in C$, and give an upper bound on $|A| \cdot |B| \cdot |C|$, 
where $|A|$ denote the order of the set $A$. 

\begin{theorem} 
\label{thm:Gowers}

If $A$, $B$, $C$ are subsets of $G$ such that there is no solution of the equation $ab = c$ with $a \in A$, $b \in B$, $c \in C$, then $|A| \cdot |B| \cdot |C| \leq n^3/\delta$,
where $\delta$ is defined as the smallest dimension of a nontrivial representation of $G$. Consequently, $\beta(G) \leq \delta ^{-1/3}$.

\end{theorem}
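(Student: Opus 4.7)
The plan is to reduce the claim to a Fourier/representation-theoretic bound on a convolution. I will count the number $N$ of triples $(a,b,c)\in A\times B\times C$ with $ab=c$ using the convolution $(1_A * 1_B)(x)=\#\{(a,b)\in A\times B : ab=x\}$, so that $N=\langle 1_C, 1_A * 1_B\rangle$ with respect to the standard inner product on $\ell^2(G)$. Writing $\alpha=|A|/n$, $\beta=|B|/n$, $\gamma=|C|/n$ and decomposing each indicator as its mean plus a mean-zero part, $1_A=\alpha+f_A$ and similarly for $B,C$, the cross terms vanish because convolving a constant with a mean-zero function gives $0$. This yields the identity
\[
N \;=\; \frac{|A|\,|B|\,|C|}{n} \;+\; \langle f_C, f_A * f_B\rangle.
\]
Imposing $N=0$ and applying Cauchy--Schwarz reduces the problem to bounding $\|f_A * f_B\|_2$.

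The core step is a representation-theoretic estimate: for any mean-zero $f\in\ell^2(G)$ and arbitrary $g\in\ell^2(G)$,
\[
\|f*g\|_2 \;\leq\; \sqrt{n/\delta}\,\|f\|_2\,\|g\|_2.
\]
I would prove this by decomposing the regular representation into irreducibles and using the noncommutative Plancherel formula $\|h\|_2^2=\tfrac1n\sum_\rho d_\rho\|\hat h(\rho)\|_{\mathrm{HS}}^2$ together with the convolution identity $\widehat{f*g}(\rho)=\hat f(\rho)\hat g(\rho)$. Sub-multiplicativity of the operator norm gives $\|\hat f(\rho)\hat g(\rho)\|_{\mathrm{HS}}\leq \|\hat f(\rho)\|_{\mathrm{op}}\|\hat g(\rho)\|_{\mathrm{HS}}$. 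Since $f$ has mean zero, $\hat f$ is supported on nontrivial $\rho$, for which $d_\rho\geq\delta$, so $\|\hat f(\rho)\|_{\mathrm{op}}^2\leq\|\hat f(\rho)\|_{\mathrm{HS}}^2\leq(d_\rho/\delta)\|\hat f(\rho)\|_{\mathrm{HS}}^2$. Summing over $\rho$ and applying Plancherel to both $f$ and $g$ gives the desired inequality. This step, leveraging the hypothesis on $\delta$ through the dimension lower bound on nontrivial irreducibles, is the main content of the argument.

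To finish, I would assemble the pieces. A direct computation gives $\|f_A\|_2^2=|A|(1-\alpha)\leq|A|$, and similarly for $B$ and $C$. Combining these with the convolution bound yields
\[
\frac{|A|\,|B|\,|C|}{n} \;=\; -\langle f_C, f_A*f_B\rangle \;\leq\; \|f_C\|_2\|f_A*f_B\|_2 \;\leq\; \sqrt{n/\delta}\,\sqrt{|A|\,|B|\,|C|}.
\]
Squaring and rearranging gives $|A|\,|B|\,|C|\leq n^3/\delta$. The final assertion about $\beta(G)$ follows by specializing to $A=B=C$ being a product-free set, so that $|A|^3\leq n^3/\delta$ and hence $\beta(G)\leq \delta^{-1/3}$. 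The principal obstacle is the passage from a Hilbert--Schmidt bound (which is what Plancherel delivers) to an operator-norm bound (which is what multiplicativity under convolution requires); the gap is exactly the factor $\sqrt{d_\rho/\delta}$, and it is precisely the absence of small nontrivial representations that makes the whole approach give a nontrivial bound.
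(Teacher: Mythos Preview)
Your argument is correct and complete. The paper packages the same proof in graph-theoretic rather than Fourier-analytic language: it builds the bipartite Cayley graph on two copies of $G$ with edges determined by $A$, lets $N$ be its incidence matrix, and splits the work into Lemma~\ref{lemma:1} (the second eigenvalue of $M=NN^{\mathrm T}$ is at most $n|A|/\delta$) and Lemma~\ref{lemma:2} (an expander-mixing inequality $|A||B||C|\le n^2\lambda/|A|$). These are exactly your two steps in disguise: $N$ acts on $\ell^2(G)$ as convolution by $1_A$, so the nontrivial eigenvalues of $M$ are the squared operator norms $\|\widehat{1_A}(\rho)\|_{\mathrm{op}}^2$ at nontrivial $\rho$, and Lemma~\ref{lemma:1} is your bound $\|\hat f(\rho)\|_{\mathrm{op}}^2\le n\|f\|_2^2/\delta$, while Lemma~\ref{lemma:2} is your mean-zero decomposition plus Cauchy--Schwarz. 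Your Fourier presentation makes the role of $\delta$ completely transparent (it enters only through $d_\rho\ge\delta$ in Plancherel) and extends at once to arbitrary functions rather than indicator sets; the paper's spectral presentation connects more visibly to the expander-mixing literature and lets one quote the eigenvalue bound as a black box without invoking nonabelian Fourier analysis explicitly. One small wording issue: your inequality $\|\hat f(\rho)\|_{\mathrm{HS}}^2\le (d_\rho/\delta)\|\hat f(\rho)\|_{\mathrm{HS}}^2$ is true but does not by itself let you ``apply Plancherel to both $f$ and $g$'' after summing; the clean way is to pull out $\max_{\rho\neq 1}\|\hat f(\rho)\|_{\mathrm{op}}^2$ and bound that single term by $n\|f\|_2^2/\delta$ via $d_\rho\|\hat f(\rho)\|_{\mathrm{HS}}^2\le n\|f\|_2^2$.
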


To prove this, a certain {\em bipartite Cayley graph} $\Gamma$ associated to $G$ is constructed. The vertex set of $\Gamma$
is $V_{1} \cup V_{2}$, where each $V_{i}$ is a copy of $G$, with an edge from $x \in V_{1}$ to $y \in V_{2}$ if and only if $yx^{-1} \in A$. Therefore,
there are no edges between $B \subseteq V_{1}$ and $C \subseteq V_{2}$. Let $N$ be the {\em incidence matrix} of $\Gamma$, with columns indexed by $V_{1}$ and rows
by $V_{2}$, with an entry in row $x$ and column $y$ if $xy$ is an edge of $\Gamma$. Define $M = NN^{\mathrm{T}}$; then the largest eigenvalue of $M$ is $|A|$.  
Let $\lambda$ denote the  second largest eigenvalue of the matrix $M$. Gowers's theorem relies on the following two lemmas.

\begin{lemma}
\label{lemma:1}

The second largest eigenvalue $\lambda$ of the matrix $M$ is at most $n \cdot |A| / \delta$.

\end{lemma}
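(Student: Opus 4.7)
The plan is to diagonalize $M$ via Fourier analysis on $G$ and then control each Fourier block using Plancherel's identity. A direct computation from the definition of $N$ shows
\[
M_{x,x'} = \#\{a \in A : x'x^{-1}a \in A\} = f(x'x^{-1}),
\]
where $f(g) := |A \cap g^{-1}A| = \sum_a \chi_A(a)\chi_A(ga)$. Since $M_{x,x'}$ depends only on $x'x^{-1}$, the matrix $M$ commutes with the right regular representation of $G$; standard Fourier analysis on finite groups then tells us that the spectrum of $M$ is the union, over irreducible unitary representations $\rho$ of $G$, of the eigenvalues of the Fourier block $\hat{f}(\rho) := \sum_g f(g)\rho(g)$, each eigenvalue appearing with multiplicity $d_\rho = \dim\rho$.

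The key observation is that by substituting the expansion of $f$ and reindexing the sum (setting $b=ga$), one obtains the factorization
\[
\hat{f}(\rho) = \hat{\chi}_A(\rho)\,\hat{\chi}_A(\rho)^{*}, \qquad \text{where } \hat{\chi}_A(\rho) = \sum_{g \in A} \rho(g).
\]
Hence each block $\hat{f}(\rho)$ is positive semidefinite. The trivial representation accounts for the top eigenvalue (namely $|A|^2$), so $\lambda$ is the largest of the eigenvalues of $\hat{f}(\rho)$ over nontrivial $\rho$, for which $d_\rho \geq \delta$. Plancherel's identity applied to $\chi_A$ yields
\[
\sum_\rho d_\rho\,\mathrm{tr}(\hat{f}(\rho)) \;=\; \sum_\rho d_\rho\,\|\hat{\chi}_A(\rho)\|_{\mathrm{HS}}^{2} \;=\; n\,\|\chi_A\|_{2}^{2} \;=\; n|A|.
\]
All summands on the left are nonnegative, so for each individual nontrivial $\rho$ one has $d_\rho\,\mathrm{tr}(\hat{f}(\rho)) \leq n|A|$. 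Using the elementary bound $\|X\|_{\mathrm{op}} \leq \mathrm{tr}(X)$ for positive semidefinite $X$, I conclude
\[
\|\hat{f}(\rho)\|_{\mathrm{op}} \;\leq\; \mathrm{tr}(\hat{f}(\rho)) \;\leq\; \frac{n|A|}{d_\rho} \;\leq\; \frac{n|A|}{\delta},
\]
which is exactly the claimed bound on $\lambda$.

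The main conceptual step is the spectral identification of $M$ with the Fourier blocks $\hat{f}(\rho)$; this is standard for Cayley-type matrices but is the crucial ingredient. The remaining pieces---the PSD factorization of $\hat{f}(\rho)$, Plancherel, and the trace bound on the operator norm of a PSD matrix---are essentially one-line computations, so I do not anticipate any substantive obstacle.
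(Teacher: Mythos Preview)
The paper does not supply its own proof of this lemma; it is quoted as one of the two ingredients behind Gowers's Theorem~\ref{thm:Gowers}, with the argument left to \cite{Gowers}. So there is no in-paper proof to compare against directly.

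Your argument is correct. It is essentially Gowers's original trace argument rewritten in explicit Fourier language. Gowers observes that $M$ commutes with the $G$-action, so every eigenspace is a $G$-representation; the $\lambda$-eigenspace therefore has dimension at least $\delta$, and since $M=NN^{\mathrm T}$ is positive semidefinite with $\mathrm{tr}(M)=n|A|$ (each diagonal entry being $f(e)=|A|$), one gets $\delta\lambda\le\mathrm{tr}(M)=n|A|$. Your decomposition $M\cong\bigoplus_\rho d_\rho\cdot\hat f(\rho)$ together with the Plancherel identity is exactly this trace computation carried out blockwise, and your bound $\|\hat f(\rho)\|_{\mathrm{op}}\le\mathrm{tr}\,\hat f(\rho)$ replaces the ``multiplicity $\ge\delta$'' step. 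What your version buys is an explicit identification of the blocks as $\hat\chi_A(\rho)\hat\chi_A(\rho)^{*}$, which makes the PSD structure transparent; Gowers's version is slightly shorter because it never names the Fourier transform.

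One incidental remark: you correctly compute the top eigenvalue of $M$ at the trivial representation as $|A|^{2}$. The paper's statement that ``the largest eigenvalue of $M$ is $|A|$'' appears to be a slip (it is the largest singular value of $N$ that equals $|A|$); this does not affect Lemma~\ref{lemma:1} or its use in the proof of Theorem~\ref{thm:product-poor}.
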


\begin{lemma}
\label{lemma:2}

$|A| \cdot |B| \cdot |C| \leq n^2 \cdot \frac{\lambda}{|A|}$.

\end{lemma}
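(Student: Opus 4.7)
The plan is to turn the combinatorial hypothesis into a spectral estimate, essentially via an expander-mixing-style argument on $\Gamma$. By construction of $\Gamma$, an edge from $x\in V_1$ to $y\in V_2$ corresponds precisely to $yx^{-1}\in A$, so the hypothesis ``no $(a,b,c)\in A\times B\times C$ satisfies $ab=c$'' is equivalent to the statement that there are no edges of $\Gamma$ between $B\subseteq V_1$ and $C\subseteq V_2$, i.e.\ $\mathbf{1}_C^T N \mathbf{1}_B = 0$, where $\mathbf{1}_B,\mathbf{1}_C$ denote the indicator vectors of $B,C$.

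First I would decompose these indicator vectors against the top eigenvector of $M$. Because $\Gamma$ is $|A|$-regular on each side, one checks that $N\mathbf{1}=N^T\mathbf{1}=|A|\mathbf{1}$, so $\mathbf{1}$ is a top eigenvector of both $M=NN^T$ and $N^TN$. Writing
\[
\mathbf{1}_C = \frac{|C|}{n}\mathbf{1}+f', \qquad \mathbf{1}_B = \frac{|B|}{n}\mathbf{1}+g',
\]
with $f',g'\perp\mathbf{1}$, and expanding the vanishing bilinear form $\mathbf{1}_C^T N\mathbf{1}_B=0$, the three cross terms pairing $\mathbf{1}$ against $f'$ or $g'$ vanish by orthogonality, leaving the key identity
\[
f'^T N g' \;=\; -\frac{|A|\cdot|B|\cdot|C|}{n}.
\]

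The main estimate is then Cauchy--Schwarz in the form $|f'^T N g'|\le \|N^T f'\|\cdot\|g'\|$, combined with the spectral bound $\|N^T f'\|^2 = f'^T M f' \le \lambda\|f'\|^2$, which holds precisely because $f'$ is orthogonal to the top eigenspace of $M$. Using the elementary bounds $\|f'\|^2=|C|(1-|C|/n)\le|C|$ and $\|g'\|^2\le|B|$, this gives
\[
\frac{|A|\cdot|B|\cdot|C|}{n}\;\le\;\sqrt{\lambda}\,\sqrt{|B|\cdot|C|},
\]
and squaring and dividing by $|A|$ produces exactly the claimed inequality. I do not expect any serious obstacle: the argument is just the bipartite expander mixing lemma applied to the empty cut $(B,C)$. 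The one point requiring mild care is tracking the half-power of $\lambda$ arising because the spectral gap lives on $M=NN^T$ rather than on $N$ itself; after squaring, this is what produces the advertised factor $\lambda/|A|$ in the final bound, rather than $\lambda^2/|A|$.
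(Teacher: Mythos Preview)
Your argument is correct. The paper does not give a standalone proof of this lemma (it is quoted from Gowers), but the proof of Theorem~\ref{thm:product-poor} specializes at $p=0$ to exactly this statement, and the method there is the same expander-mixing idea you use. The only cosmetic difference is that the paper decomposes just one indicator vector, writing $\mathbf{w}=\mathbf{1}_B-\tfrac{|B|}{n}\mathbf{1}$ and then bounding $\|N\mathbf{w}\|^2$ from above by $\lambda\|\mathbf{w}\|^2\le\lambda|B|$ and from below by the contribution of the rows in $C$ (each of which equals $-|A||B|/n$ in the product-free case); you instead decompose both $\mathbf{1}_B$ and $\mathbf{1}_C$ and bound the bilinear form $f'^T N g'$ via Cauchy--Schwarz. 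Both routes arrive at $|A|^2|B||C|\le\lambda n^2$ in one step, and neither buys anything the other does not.
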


It can be proved that the group $G$ has a proper subgroup of index at most $c \delta^2$ for some constant $c$ \cite{Liebeck96}. 
Therefore, we have the following bounds for $\alpha(G)$.

\begin{theorem} 
\label{thm:main}

$cn/\delta \leq \alpha(G) \leq cn/\delta^{1/3}$ for some constant $c$.

\end{theorem}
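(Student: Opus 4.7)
The plan is to assemble Theorem \ref{thm:main} from results already recorded in the excerpt; neither inequality requires new technical work, only a correct composition of constants.

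For the upper bound $\alpha(G) \leq n/\delta^{1/3}$, I would apply Theorem \ref{thm:Gowers} with $A=B=C$ taken to be a product-free subset of maximal size. Product-freeness exactly says there is no solution of $ab=c$ in this common set, so the hypothesis of Theorem \ref{thm:Gowers} is met, yielding $\alpha(G)^3 \leq n^3/\delta$ and hence the desired bound with constant $c=1$.

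For the lower bound $\alpha(G) \geq cn/\delta$, I would combine Theorem \ref{thm:low} with the cited consequence of \cite{Liebeck96}: $G$ admits a proper subgroup of index $m \leq c_1 \delta^2$. Substituting this into the inequality $\beta(G) \geq c_0 m^{-1/2}$ from Theorem \ref{thm:low} gives $\beta(G) \geq c_0 c_1^{-1/2} \delta^{-1}$, so $\alpha(G) = n\beta(G) \geq c n/\delta$ with $c = c_0 c_1^{-1/2}$. The symbol $c$ in the theorem statement should be read as a (possibly different) absolute constant on each side, as is conventional in this literature.

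The substantive content sits in the three earlier theorems and in the Liebeck--Shalev-type minimum-index bound; assuming all four are available, I do not anticipate any obstacle beyond bookkeeping of constants. The only step where one would want to be a bit careful is confirming that the bound quoted from \cite{Liebeck96} applies to arbitrary finite $G$ (rather than, say, to simple or quasisimple groups alone), so that the chain of implications runs uniformly in $G$; this is a matter of checking the original reference rather than proving anything new.
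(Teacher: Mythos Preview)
Your proposal is correct and matches the paper's approach exactly: the paper presents Theorem~\ref{thm:main} as an immediate corollary of Theorem~\ref{thm:Gowers} (for the upper bound) together with Theorem~\ref{thm:low} and the Liebeck--Shalev bound $m \leq c\delta^2$ (for the lower bound), which is precisely the assembly you describe. Your caveat about the scope of the result from \cite{Liebeck96} is well taken but orthogonal to the argument itself.
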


Since the gap between the lower bound and the upper bound for $\alpha(G)$ appears quite small, one might ask about closing it. However, it has been proved in \cite{kedlaya2007pfs} that Gowers's argument alone is not sufficient since the upper bound in Theorem \ref{thm:Gowers} cannot be improved if the three sets $A$, $B$, and $C$ are allowed to be different. In addition, Gowers also made some generalizations to the case of many subsets. Instead of finding two elements $a$ and $b$ in two subsets $A$ and $B$ such that their product is in a third subset $C$, he proposed to find $x_1, \ldots, x_m$ in $m$ subsets such that for every nonempty subset $F \subset \{1,2,\ldots,m\}$, the product of those $x_i$ with $i \in F$ lies in a specified subset.

In this paper, instead of trying to close the gap between the lower bound and the upper bound for $\alpha(G)$, we will show that Gowers's Theorem \ref{thm:Gowers} can actually be generalized to {\em product-poor subsets} of a group. We will give the precise definition of product-poor subsets as well as the upper bound for the size of product-poor subsets in Section \ref{sec:product-poor}. In Section \ref{sec:manysubsets}, we will examine Gowers's generalization to the case of many subsets, and we will further generalize it to the problem of finding $x_1, \ldots, x_m$ in $m$ subsets such that for certain ({\em not} all) subsets $F \subset \{1,2,\ldots,m\}$ the product of those $x_i$ with $i \in F$ lies in a specified subset.

\section{Product-poor subsets of a group}
\label{sec:product-poor}

In this section, we will state and prove a generalization of Theorem~\ref{thm:Gowers}. We will consider the size of the largest product-poor subset instead of product-free subset. In product-poor subsets, there are a few pairs of elements whose product is also in this set. It turns out that we can derive the same asymptotic upper bound for the size of a product-poor subset. Despite the fact that the best known lower bound and the upper bound for the size of the largest product-free subset do not coincide, these two bounds do coincide asymptotically for the largest product-poor subset. First of all, we give the precise definition of a product-poor subset.

\begin{definition}

A subset $A$ of group $G$ is $p$-product-poor iff the number of pairs $(a,b)\in A\times A$ such that $ab\in A$ is at most $p|A|^2$.

\end{definition}

We now give a generalization of Gowers's argument. We change the condition that there are no solutions of the equation $ab=c$ with $a\in A$, $b\in B$, $c\in C$, to the weaker condition that there are only a few solutions of that equation. In fact, Babai, Nikolov, and Pyber \cite{BabaiNikolovPyber} recently discovered a more general result depending on probability distributions in $G$. However, here we want to emphasize the result that the upper bound of $|A| \cdot |B| \cdot |C|$ is asymptotically the same for both product-free subsets and product-poor subsets. Note that this theorem is similar to a lemma in \cite{Gowers}, which is stated in Lemma~\ref{lemma:Gowers} below.

\begin{theorem}
\label{thm:product-poor}

Let $G$ be a group of order $n$. Let $A$, $B$, and $C$ be subsets of $G$ with orders $rn$, $sn$, and $tn$, respectively. If there are exactly $prstn^2$ solutions of the equation $ab = c$ with $a \in A$, $b \in B$, $c \in C$, then $rst(1-p)^2\delta \leq 1$.

\end{theorem}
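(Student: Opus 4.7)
The plan is to adapt Gowers's spectral argument (the one behind Lemmas~\ref{lemma:1} and~\ref{lemma:2}) by replacing the hypothesis ``no solutions'' with the quantitative hypothesis ``exactly $prstn^2$ solutions.'' Reusing the bipartite Cayley graph $\Gamma$, incidence matrix $N$, and $M=NN^{\mathrm{T}}$ set up in the excerpt, I first observe that the number of triples $(a,b,c)\in A\times B\times C$ with $ab=c$ equals $\langle N\mathbf{1}_B,\mathbf{1}_C\rangle$: the edge from $b\in V_1$ to $c\in V_2$ is present exactly when $a:=cb^{-1}$ lies in $A$. So the hypothesis reads $\langle N\mathbf{1}_B,\mathbf{1}_C\rangle=prstn^2$.

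Next I would decompose $\mathbf{1}_B=(|B|/n)\,\mathbf{1}+\mathbf{1}_B^{\perp}$ and analogously $\mathbf{1}_C=(|C|/n)\,\mathbf{1}+\mathbf{1}_C^{\perp}$, where the perpendicular components are orthogonal to the all-ones vector $\mathbf{1}$. Since the rows and columns of $N$ each contain exactly $|A|$ ones, $\mathbf{1}$ is a common eigenvector of $N$ and $N^{\mathrm{T}}$ with eigenvalue $|A|$, so the cross terms in
\[
\langle N\mathbf{1}_B,\mathbf{1}_C\rangle=\frac{|A|\cdot|B|\cdot|C|}{n}+\langle N\mathbf{1}_B^{\perp},\mathbf{1}_C^{\perp}\rangle
\]
vanish. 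Combining this identity with the hypothesis and the fact that $|A||B||C|/n=rstn^2$ yields $|\langle N\mathbf{1}_B^{\perp},\mathbf{1}_C^{\perp}\rangle|=(1-p)rstn^2$.

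Now I apply Cauchy--Schwarz together with the spectral bound from $M$. Since $\mathbf{1}_B^{\perp}$ is orthogonal to the top eigenvector of $M$, we have $\|N\mathbf{1}_B^{\perp}\|^2=\langle M\mathbf{1}_B^{\perp},\mathbf{1}_B^{\perp}\rangle\leq\lambda\|\mathbf{1}_B^{\perp}\|^2\leq\lambda|B|$, and similarly $\|\mathbf{1}_C^{\perp}\|^2\leq|C|$. Putting these together gives
\[
(1-p)^2 r^2 s^2 t^2 n^4 \;\leq\; \lambda\cdot |B|\cdot|C| \;=\; \lambda\, stn^2,
\]
and invoking Lemma~\ref{lemma:1} to bound $\lambda\leq n|A|/\delta=rn^2/\delta$ then simplifies the inequality to $rst(1-p)^2\delta\leq 1$, as desired.

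There is no serious obstacle: the proof is a direct quantitative version of Gowers's argument, with the cancellation $\langle N\mathbf{1}_B,\mathbf{1}_C\rangle=0$ replaced by an identity that leaves a residual factor $(1-p)$. The only care needed is to keep the normalizations straight and to track how that residual $(1-p)$ propagates through Cauchy--Schwarz to become the factor $(1-p)^2$ in the final inequality.
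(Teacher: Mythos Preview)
Your argument is correct and follows essentially the same spectral route as the paper: both proofs subtract the constant component from $\mathbf{1}_B$, invoke Lemma~\ref{lemma:1} to bound $\|N\mathbf{1}_B^{\perp}\|^2\le \lambda\,|B|\le rn^2\cdot sn/\delta$, and finish with Cauchy--Schwarz. The only cosmetic difference is that the paper decomposes just $\mathbf{1}_B$ and then restricts $\|N\mathbf{w}\|^2$ to the coordinates in $C$ (using $\sum_{c\in C}k_c^2\ge (\sum_{c\in C}k_c)^2/|C|$), whereas you decompose $\mathbf{1}_C$ as well and apply Cauchy--Schwarz directly to $\langle N\mathbf{1}_B^{\perp},\mathbf{1}_C^{\perp}\rangle$; unwinding either computation yields the same inequality $(1-p)^2r^2s^2t^2n^4\le \lambda\,stn^2$.
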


\begin{proof}

Let $\mathbf{v}$ be the characteristic function of $B$, and put $\mathbf{w} = \mathbf{v} - s \mathbf{1}$. Then
\begin{align*}
\mathbf{w} \cdot \mathbf{1} &= 0, \\
\mathbf{w} \cdot \mathbf{w} &= (1-s)^2 \cdot sn + s^2(n-sn) = s(1-s)n \leq sn,
\end{align*}
so by Lemma \ref{lemma:1}, $\Vert N \mathbf{w} \Vert ^2 \leq rn^2 sn/\delta$.

Let $N$ be the incidence matrix of $G$ of size $n \times n$, in which there is an entry in row $x$ and column $y$ iff $xy^{-1} \in B$. Consider the submatrix $N_1$ of $N$ containing those rows corresponding to the elements in $C$, and those columns corresponding to the elements in $A$. The matrix $N_1$ has size $tn \times rn$. Suppose that there are $k_i$ ones in row $i$ of $N_1$ ($1 \leq i \leq tn$). Note that there exists a one-to-one correspondence between the solutions of the equation $ab=c$ with $a \in A$, $b \in B$, $c \in C$, and the nonzero entries in $N_1$. As a result, we have
\begin{displaymath} k_1 + k_2 + \ldots + k_{tn} = prstn^2. \end{displaymath}

Using the above equality, we have
\begin{align*}
\Vert N \mathbf{w} \Vert ^2 & \geq (k_1 - rsn)^2 + (k_2 - rsn)^2 + \ldots + (k_{tn} - rsn)^2 \\
                            & = \sum_{i=1}^{tn} k_i^2 - 2rsn \sum_{i=1}^{tn} k_i + (rsn)^2 \cdot (tn) \\
                            & \geq \frac{1}{tn} (\sum_{i=1}^{tn} k_i)^2 - 2rsn \sum_{i=1}^{tn} k_i + (rsn)^2 \cdot (tn) \\
                            & = p^2 r^2 s^2 t n^3 - 2rsn prstn^2 + r^2 s^2 t n^3 \\
                            & = r^2 s^2 t n^3 (1-p)^2
\end{align*}

Combining the upper bound and the lower bound for $\Vert N \mathbf{w} \Vert ^2$ derived above, we get
\begin{displaymath} r^2 s^2 t n^3 (1-p)^2 \leq \Vert N \mathbf{w} \Vert ^2 \leq rn^2 sn/ \delta, \end{displaymath}
hence $rst(1-p)^2 \delta \leq 1$ as desired.

\end{proof}

Applying Theorem \ref{thm:product-poor} to the special case in which $A=B=C$, we get the following corollary about product-poor subsets of a group.

\begin{corollary}
\label{cor:product-poor}

The size of any $p$-product-poor subset of group $G$ is at most $c|A|/\delta^{1/3}$ for some constant $c=c(p)$ if $p$ is at most $1/\delta^{1/3}$.

\end{corollary}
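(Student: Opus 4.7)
The plan is to apply Theorem~\ref{thm:product-poor} in the diagonal case $A=B=C$, setting $r=s=t=|A|/n$. First I would translate the product-poor hypothesis into the parameter used by the theorem. Let $N$ denote the number of triples $(a,b,c)\in A^3$ with $ab=c$, and write $N=qr^3n^2$, so $q$ plays the role of ``$p$'' in the theorem. The definition of $p$-product-poor gives $N\leq p|A|^2=pr^2n^2$, which translates to $q\leq p/r$. Plugging into the theorem (assuming $q<1$) then yields
\begin{displaymath}
r^3(1-q)^2\delta \leq 1.
\end{displaymath}

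Next I would split into two cases depending on how $r$ compares to $p$. If $r\leq 2p$, then using the hypothesis $p\leq \delta^{-1/3}$ we get $|A|=rn\leq 2pn\leq 2n/\delta^{1/3}$ immediately. If instead $r>2p$, then $q\leq p/r<1/2$, so $(1-q)^2>1/4$, and the displayed inequality gives $r^3\delta<4$, i.e., $|A|<4^{1/3}\,n/\delta^{1/3}$. Combining the two cases yields $|A|\leq 2n/\delta^{1/3}$, so $c=2$ suffices (in fact independent of $p$, under the hypothesis $p\leq\delta^{-1/3}$). I am reading the corollary's bound ``$c|A|/\delta^{1/3}$'' as $cn/\delta^{1/3}$, since otherwise the statement is ill-formed and this matches the shape of Theorem~\ref{thm:main}.

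The main conceptual obstacle is the mismatch between the two measures of poorness: the definition normalizes the number of solutions by $|A|^2$, whereas Theorem~\ref{thm:product-poor} normalizes by $|A|^3/n$. This discrepancy is precisely what forces the conversion $q\leq p/r$ and hence the case split, and it also explains why the hypothesis $p\leq \delta^{-1/3}$ is needed: without it, the ``small $|A|$'' branch of the split would already overshoot $n/\delta^{1/3}$, so some restriction of this form is essential for the conclusion.
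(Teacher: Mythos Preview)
Your proof is correct and follows the paper's approach exactly: the paper simply asserts that the corollary is obtained by ``applying Theorem~\ref{thm:product-poor} to the special case in which $A=B=C$'' and gives no further details. You have filled in precisely those details---the conversion $q\le p/r$ between the two normalizations and the resulting case split---and your reading of the bound as $cn/\delta^{1/3}$ rather than the literal $c|A|/\delta^{1/3}$ is the intended one.
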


On the other hand, we can construct a subset $S$ of $G$ by taking the union of $k$ cosets of a nontrivial subgroup of smallest index $m$. We estimate in Theorem \ref{thm:product-poor-construct} an upper bound for the number $p$, such that the subset $S$ is guaranteed to be $p$-product-poor. We do this by estimating the number of solutions of the equation $ab=c$ with $a,b,c \in S$. It turns out that if the value of $k$ is chosen suitably ($k \sim m/\delta^{1/3}$), then the subset $S$ constructed this way has order $\sim n/m^{1/3}$, and this subset $S$ is $1/m^{1/3}$-product-poor. Therefore, for those groups with $m = \Theta(\delta)$ (such as $PSL_2(q)$), we can find a $c/\delta^{1/3}$-product-poor subset of order $\sim n/\delta^{1/3}$ for some constant $c$.

\begin{theorem}
\label{thm:product-poor-construct}

For $G$ admitting a transitive action on $\{ 1,2, \ldots, m \}$ with $m>3$, define
\begin{displaymath} S = \bigcup_{x \in T} \{ g \in G: g(1) = x \}  \end{displaymath}
for any subset $T$ of $\{2,\ldots,m\}$ of order $k \geq 3$. There exists a $T$, such that the number of solutions of the equation $ab=c$ with $a,b,c \in S$ is at most $4n^2 k^3/(m-3)^3$.

\end{theorem}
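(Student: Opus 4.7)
The plan is to apply the probabilistic method: choose $T$ uniformly at random among the $k$-element subsets of $\{2,\ldots,m\}$ and show that the expected number of solutions of $ab=c$ with $a,b,c \in S$ is at most $4n^2k^3/(m-3)^3$. Since $c=ab$ is determined by $(a,b)$, we are counting pairs $(a,b) \in G \times G$ with $a(1), b(1), (ab)(1) \in T$. Stratify such pairs by the triple $(w,u,v) = (a(1), b(1), a(b(1))) \in T^3$, and set
\[
M(w,u,v) \;=\; \bigl|\{(a,b) \in G \times G : a(1) = w,\ b(1) = u,\ a(u) = v\}\bigr|,
\]
so that $\mathrm{sols}(T) := \sum_{w,u,v \in T} M(w,u,v)$ is the quantity we wish to bound.

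A preliminary observation simplifies the calculation: since each $u \in T$ satisfies $u \neq 1$, and $a$ is a bijection of $\{1,\ldots,m\}$, the conditions $a(1) = w$ and $a(u) = v$ force $w \neq v$; hence $M(w,u,v) = 0$ whenever $w = v$. When we write
\[
\mathbb{E}_T[\mathrm{sols}(T)] \;=\; \sum_{\substack{w,u,v \in \{2,\ldots,m\} \\ w \neq v}} M(w,u,v)\,\Pr[\{w,u,v\} \subseteq T],
\]
only three coincidence patterns contribute: all three distinct, $w = u \neq v$, and $u = v \neq w$. The corresponding probabilities satisfy
\[
\frac{\binom{m-4}{k-3}}{\binom{m-1}{k}} \;\leq\; \frac{k^3}{(m-3)^3}, \qquad \frac{\binom{m-3}{k-2}}{\binom{m-1}{k}} \;\leq\; \frac{k^2}{(m-3)^2},
\]
which will be applied to the distinct and two-equal strata, respectively.

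The $M$-sums admit crude global bounds. First, $\sum_{w,u,v} M(w,u,v) = |G|^2 = n^2$, since each pair $(a,b)$ contributes to the unique triple $(a(1),b(1),a(b(1)))$. Second, for $w = u$, summing $M(w,w,v)$ over $v$ gives $|\{a:a(1)=w\}| \cdot |\{b:b(1)=w\}| = (n/m)^2$; for $u = v$, summing $M(w,u,u)$ over $w$ gives $|\{b:b(1)=u\}| \cdot |\mathrm{Stab}(u)| = (n/m)^2$. Consequently each two-equal partial sum is at most $(m-1)(n/m)^2$. Combining these ingredients yields
\[
\mathbb{E}_T[\mathrm{sols}(T)] \;\leq\; \frac{k^3 n^2}{(m-3)^3} \;+\; 2 \cdot \frac{k^2}{(m-3)^2} \cdot \frac{(m-1)n^2}{m^2},
\]
and the hypotheses $k \geq 3$ and $m > 3$ give $2(m-1)(m-3) \leq 3km^2$, which bounds the second term by $3k^3n^2/(m-3)^3$. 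Hence $\mathbb{E}_T[\mathrm{sols}(T)] \leq 4n^2k^3/(m-3)^3$, and some $T$ achieving this bound must exist.

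The main technical nuisance is the case analysis for coincidences among $w,u,v$. The \emph{degenerate} terms where two of $a(1), b(1), (ab)(1)$ coincide still contribute on the order of $n^2$ to $\sum_{w,u,v} M(w,u,v)$ and so cannot be ignored; they are controlled only by combining the $1/m$ savings from $(m-1)(n/m)^2$ with the weaker probability bound $k^2/(m-3)^2$, and the assumptions $k \geq 3$ and $m > 3$ are precisely what absorb the degenerate contribution into the main term at the cost of the constant $4$.
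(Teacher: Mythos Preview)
Your argument is correct and is essentially the same averaging argument as the paper's: sum over all $k$-subsets $T$ of $\{2,\ldots,m\}$, interchange with the sum over pairs $(a,b)$, and split according to coincidences among $a(1),b(1),ab(1)$, using $\binom{m-4}{k-3}$ for the generic term and $\binom{m-3}{k-2}$ for the degenerate ones. The only cosmetic differences are that you package the count via the auxiliary quantities $M(w,u,v)$ and you explicitly discard the case $w=v$ (equivalently $a(1)=ab(1)$, i.e.\ $b(1)=1$), whereas the paper leaves that vacuous case in the estimate; the final inequality and constant $4$ come out the same.
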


\begin{proof}

Consider the following summation:
\begin{align*}
& \sum_{T} | \{a, b \in G: a(1), b(1), ab(1) \in T \} |  \\
= & \sum_{a,b} | \{T: |T|=k, a(1),b(1),ab(1) \in T \} | \\
\leq & \sum_{a(1) = b(1)} \binom{m-3}{k-2} + \sum_{a(1)=ab(1)} \binom{m-3}{k-2} + \sum_{b(1)=ab(1)} \binom{m-3}{k-2} + \sum_{a,b} \binom{m-4}{k-3} \\
= & 3 \cdot n \cdot \frac{n}{m} \cdot \binom{m-3}{k-2} + n^2 \cdot \binom{m-4}{k-3} \\
\leq & 4n^2 \cdot \binom{m-4}{k-3}.
\end{align*}

Since there are a total of $\binom{m-1}{k}$ choices for $T$, there exists some $T$ of order $k$, such that the number of solutions of $ab=c$ with $a,b,c \in T$ is at most
\begin{displaymath} 4n^2 \cdot \frac{\binom{m-4}{k-3}}{\binom{m-1}{k}} = 4n^2 \cdot \frac{k(k-1)(k-2)}{(m-1)(m-2)(m-3)} \leq \frac{4n^2 k^3}{(m-3)^3}  \end{displaymath}

\end{proof}

In Theorem \ref{thm:product-poor-construct}, if we choose $k\sim m/ \delta^{1/3}$, then the subset $S$ has order $n/ \delta^{1/3}$, and is $c/\delta^{1/3}$-product-poor for some constant $c$. This shows that the upper bound in Corollary \ref{cor:product-poor} is asymptotically the best for families of groups with $m \sim \delta$. 

\section{Generalization to many subsets}
\label{sec:manysubsets}

In this section, we will study the problem of finding $x_1, \ldots, x_m$ in $m$ subsets such that for subsets $F$ in $\Gamma$, where $\Gamma$ is a collection of subsets of $\{1,2,\ldots,m\}$, the product of those $x_i$ with $i \in F$ lies in a specified subset. Gowers has already studied this problem in which $\Gamma$ is the collection of {\em all} subsets of $\{1,2,\ldots,m\}$. We will first state and prove this result for the simple case when $m=3$, which was first proved in \cite{Gowers}. However, a constant factor is improved significantly compared to the result in \cite{Gowers}, although the proofs are very similar. We will study the situation in which $\Gamma$ is the collection of all two-element subsets of $\{1,2,\ldots,m\}$. After this, it will be easy to derive a result for an arbitrary collection $\Gamma$.

Before we prove any result, we first repeat the following lemma in \cite{Gowers}, which is frequently used in the proofs of all the following results.

\begin{lemma}
\label{lemma:Gowers}

Let $G$ be a group of order $n$ such that no nontrivial representation has dimension less than $\delta$. Let $A$ and $B$ be two subsets of $G$ with densities $r$ and $s$, respectively and let $k$ and $t$ be two positive constants. Then, provided that $rst\geq (k^2\delta)^{-1}$, the number of group elements $x\in G$ for which $|A\cap xB|\leq (1-k)rsn$ is at most $tn$. 

\end{lemma}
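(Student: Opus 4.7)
The strategy is to convert the question about the ``bad'' set $X:=\{x\in G:|A\cap xB|\leq(1-k)rsn\}$ into a direct application of Lemma~\ref{lemma:1}, echoing the argument already used in the proof of Theorem~\ref{thm:product-poor}. First I would set up an $n\times n$ incidence matrix $N$ adapted to the Cayley structure of $B$: declare $N_{xy}=1$ iff $x^{-1}y\in B$. Then every row and column of $N$ sums to $sn$, and the identity
\[ (N\mathbf{1}_A)_x = |\{y\in A : y\in xB\}| = |A\cap xB| \]
places the quantity of interest directly at coordinate $x$ of a matrix--vector product.

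Next, following the Gowers template, I would pass to the mean-zero vector $\mathbf{w}=\mathbf{1}_A - r\mathbf{1}$, which satisfies $\mathbf{w}\cdot\mathbf{1}=0$ and $\|\mathbf{w}\|^2 = r(1-r)n\leq rn$. Because $N\mathbf{1}=sn\mathbf{1}$, we obtain $(N\mathbf{w})_x = |A\cap xB| - rsn$. Applying Lemma~\ref{lemma:1} with $B$ playing the role of $A$ (so the second largest eigenvalue of $NN^{\mathrm{T}}$ is at most $n|B|/\delta = sn^2/\delta$, and $\mathbf{w}\perp\mathbf{1}$) yields
\[ \|N\mathbf{w}\|^2 \leq \frac{n|B|}{\delta}\cdot\|\mathbf{w}\|^2 \leq \frac{rsn^3}{\delta}. \]

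To finish, I would use the bad set itself to lower-bound $\|N\mathbf{w}\|^2$. For each $x\in X$ we have $(N\mathbf{w})_x\leq-krsn$, so $(N\mathbf{w})_x^2 \geq k^2r^2s^2n^2$. Summing just over these coordinates gives $|X|\cdot k^2r^2s^2n^2 \leq rsn^3/\delta$, i.e., $|X|\leq n/(k^2 rs\delta)$. The hypothesis $rst\geq (k^2\delta)^{-1}$ rearranges to $n/(k^2 rs\delta)\leq tn$, completing the proof. The only delicate step is the orientation of $N$: one must commit to the convention $N_{xy}=1\iff x^{-1}y\in B$ so that the matrix--vector product produces $|A\cap xB|$ exactly, rather than $|A\cap xB^{-1}|$ or a translated variant; once that choice is locked in, the rest mirrors the calculation already carried out in Section~\ref{sec:product-poor} and I expect no genuine obstacle.
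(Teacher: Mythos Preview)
Your argument is correct. The paper itself does not supply a proof of Lemma~\ref{lemma:Gowers}; it simply quotes the statement from \cite{Gowers}. What you have written is precisely the standard Gowers argument, and it is also the same spectral second-moment computation the paper carries out in the proof of Theorem~\ref{thm:product-poor}: center the characteristic vector, bound $\|N\mathbf{w}\|^2$ from above by Lemma~\ref{lemma:1}, and from below by summing only over the ``bad'' coordinates. So your proposal is both correct and in complete agreement with the paper's methodology; there is nothing to add.
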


We now give an improved version of the corresponding theorem in \cite{Gowers} for the case $m=3$. The constant $M$ is $16$ in \cite{Gowers}, which is much larger than our constant $3+2\sqrt{2}$.

\begin{theorem}
\label{thm:m=3}

Let $G$ be a group of order $n$ such that no nontrivial representation has dimension less than $\delta$. Let $A_1$, $A_2$, $A_3$, $A_{12}$, $A_{13}$ and $A_{23}$ be subsets of $G$ of densities $p_1$, $p_2$, $p_3$, $p_{12}$, $p_{13}$ and $p_{23}$, respectively. Then, provided that $p_1p_2p_{12}$, $p_1p_3p_{13}$ and $p_2p_3p_{23}p_{12}p_{13}$ are all at least $M/\delta$, where $M>3+2\sqrt{2}$, there exist elements $x_1\in A_1$, $x_2\in A_2$ and $x_3\in A_3$ such that $x_1x_2\in A_{12}$, $x_1x_3\in A_{13}$ and $x_2x_3\in A_{23}$.

\end{theorem}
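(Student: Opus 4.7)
The plan is to apply Lemma~\ref{lemma:Gowers} in two successive layers: first to select a \emph{good} $x_1\in A_1$ for which the slices that control the admissible $x_2$ and $x_3$ are both dense, and then, with such an $x_1$ fixed, to select a \emph{good} $x_2$ for which some valid $x_3$ exists. A final optimization of the parameters in these applications will deliver the constant $3+2\sqrt{2}$.

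For $x_1\in G$ set
\begin{displaymath}
B_2(x_1) = A_2 \cap x_1^{-1}A_{12}, \qquad B_3(x_1) = A_3 \cap x_1^{-1}A_{13},
\end{displaymath}
so that $|B_2(x_1)|=|A_{12}\cap x_1 A_2|$ and similarly for $B_3$. Applying Lemma~\ref{lemma:Gowers} to $(A_{12},A_2)$ with parameters $(k_1,t_1)$ satisfying $p_{12}p_2 t_1\geq 1/(k_1^2\delta)$ bounds the number of $x_1\in G$ with $|B_2(x_1)|\leq (1-k_1)p_{12}p_2\, n$ by $t_1 n$. Analogously, applying Lemma~\ref{lemma:Gowers} to $(A_{13},A_3)$ with $(k_2,t_2)$ satisfying $p_{13}p_3 t_2\geq 1/(k_2^2\delta)$ bounds the number of $x_1$ bad for the $B_3$-condition by $t_2 n$. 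Hence at least $(p_1-t_1-t_2)n$ elements $x_1\in A_1$ are simultaneously good for both.

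Fix such an $x_1$ and let $\rho_2\geq (1-k_1)p_{12}p_2$, $\rho_3\geq (1-k_2)p_{13}p_3$ denote the densities of $B_2(x_1)$ and $B_3(x_1)$. I now seek $x_2\in B_2(x_1)$ with $B_3(x_1)\cap x_2^{-1}A_{23}\neq\emptyset$. Applying Lemma~\ref{lemma:Gowers} to $(A_{23},B_3(x_1))$ with $k_3=1$ and $t_3\geq 1/(p_{23}\rho_3\delta)$, the number of $x_2\in G$ for which $|A_{23}\cap x_2 B_3(x_1)|=0$ is at most $t_3 n$. So at least $(\rho_2-t_3)n$ elements $x_2\in B_2(x_1)$ admit a valid $x_3\in B_3(x_1)\cap x_2^{-1}A_{23}$, and any such triple $(x_1,x_2,x_3)$ verifies the conclusion.

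It remains to choose $k_1,k_2\in(0,1)$ so that $p_1-t_1-t_2>0$ and $\rho_2-t_3>0$ under the hypotheses. Substituting the minimal admissible $t_i$ and invoking the three lower bounds $p_1 p_2 p_{12},\ p_1 p_3 p_{13},\ p_2 p_3 p_{12}p_{13}p_{23}\geq M/\delta$, these two conditions reduce respectively to
\begin{displaymath}
\frac{1}{k_1^2}+\frac{1}{k_2^2} < M \qquad\text{and}\qquad (1-k_1)(1-k_2)\,M > 1.
\end{displaymath}
By symmetry the optimal choice is $k_1=k_2=k$, giving the requirements $M>2/k^2$ and $M>1/(1-k)^2$. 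Balancing them at $\sqrt{2}(1-k)=k$, i.e., $k=2-\sqrt{2}$, makes both lower bounds equal $3+2\sqrt{2}$, so any $M>3+2\sqrt{2}$ suffices. The main care in the argument is this final optimization; the naive choice $k_1=k_2=k_3=1/2$ in \cite{Gowers} yields only the weaker constant $M>16$.
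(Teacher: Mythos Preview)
Your proof is correct and follows essentially the same approach as the paper's: select a good $x_1\in A_1$ via two applications of Lemma~\ref{lemma:Gowers} so that both $B_2(x_1)$ and $B_3(x_1)$ are dense, then use Gowers's result once more to find $x_2,x_3$. The only cosmetic differences are that the paper fixes the parameters $\lambda=2-\sqrt{2}$ and $\mu<1/2$ at the outset (your $k$ and $t_1/p_1$) rather than optimizing at the end, and invokes Theorem~\ref{thm:Gowers} directly for the final step instead of Lemma~\ref{lemma:Gowers} with $k_3=1$; the resulting constraints $M>2/\lambda^2$ and $M>1/(1-\lambda)^2$ and the optimal value $3+2\sqrt{2}$ are identical.
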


\begin{proof}                                   

Choose two numbers $0 < \lambda < 1$ and $0 < \mu < 1/2$ such that
\begin{displaymath} M \geq \frac{1}{\mu \lambda^2}, M > \frac{1}{(1-\lambda)^2}. \end{displaymath}
(In fact, since $M>3+2\sqrt{2}$, we can choose $\lambda = 2-\sqrt{2}$ and $(3+2\sqrt{2})/(2M) < \mu < 1/2$.) If $p_1 p_2 p_{12} \geq M/\delta$, then we have
\begin{displaymath} p_1 p_2 p_{12} \geq \frac{1}{\mu \lambda^2 \delta} \end{displaymath}
from the choices of $\lambda$ and $\mu$. By Lemma~\ref{lemma:Gowers}, the number of $x_1$ such that 
\begin{displaymath} |A_2\cap x_1^{-1}A_{12}|\leq (1-\lambda) p_2p_{12}n \end{displaymath}
is at most $\mu p_1n$. Similarly, if $p_1p_3p_{13}\geq M/\delta$, then the number of $x_1$ such that 
\begin{displaymath} |A_3\cap x_1^{-1}A_{13}|\leq (1-\lambda) p_3p_{13}n \end{displaymath} 
is also at most $\mu p_1n$. Therefore, since $\mu < 1/2$, we can choose $x_1\in A_1$ such that, setting $B_2=A_2\cap x_1^{-1}A_{12}$ and $B_3=A_3\cap x_1^{-1}A_{13}$, $q_2=(1-\lambda)p_2p_{12}$ and $q_3=(1-\lambda)p_3p_{13}$, we have $|B_2|\geq q_2n$ and $|B_3|\geq q_3n$.

What remains is to show that there exist $y_2\in B_2$, $y_3\in B_3$ and $y_{23}\in A_{23}$, such that $y_2y_3=y_{23}$. This is true from the following relation:
\begin{align*}
q_2 q_3 p_{23} &= p_2p_3p_{23}p_{12}p_{13}(1-\lambda)^2 \\
&\geq \frac{M}{\delta} \cdot (1-\lambda)^2 \\
&\geq \frac{1}{\delta}.
\end{align*}

\end{proof}

In order to generalize Theorem \ref{thm:m=3} to arbitrary $m$, Gowers introduced a concept of {\em density condition} in his paper \cite{Gowers}. The basic idea is that the product of the densities of some subsets must be greater than a threshold. For example, in Theorem \ref{thm:m=3}, the threshold is $M/\delta$, and three products of densities must be greater than this threshold. In order to study the case that $\Gamma$ is the collection of all two-element subsets for general $m$, we do need to define what density products are required to be larger than the threshold. We give the following definition about density product. Although we will not repeat the definition of density condition in \cite{Gowers}, it is worth noticing that the definition given below is intrinsically the same as Gowers's definition of density condition. This similarity can be seen if we set the densities of subsets not considered in our problem to be $1$.

\begin{definition}

Suppose that for every $1\leq i \leq m$ we have a subset $A_i$ of $G$ with density $p_i$, and for every $1\leq i < j \leq m$ we have a subset $A_{ij}$ of $G$ with density $p_{ij}$. 
For any $1\leq i < j \leq m$, we define the $(i,j)$-density product to be 
\begin{displaymath} P_{ij} = p_i p_j p_{ij} \prod_{k=1}^{i-1} (p_{ki} p_{kj}) \end{displaymath}

\end{definition}

Now we state a theorem similar to Gowers's generalization. If all the density products are larger than a threshold $f(m)/\delta$, we are able to find $m$ elements $x_1,\dots,x_m$ from $m$ subsets, such that for any two-element subset $F$ of $\{1,2,\ldots,m\}$, the product of $x_i$ with $i \in F$ is in a specified subset.

\begin{theorem}
\label{thm:manysubsets}

Suppose $f(m)$ is a function such that $f(2) > 1$ and
\begin{displaymath} \left(1-\sqrt{\frac{m}{f(m)}}\right)^2 f(m) \geq f(m-1) \end{displaymath}
for any $m=3,4,\ldots$. Let $G$ be a group of order $n$ such that no nontrivial representation has dimension less than $\delta$. For every $1\leq i \leq m$ let $A_i$ be a subset of $G$ with density $p_i$. For every $1\leq i < j \leq m$ let $A_{ij}$ be a subset of $G$ with density $p_{ij}$. Suppose that all the $(i,j)$-density products are at least $f(m)/\delta$. Then there exist elements $x_1,\ldots,x_m$ of $G$ such that $x_i x_j \in A_{ij}$ for every $1\leq i < j \leq m$.

\end{theorem}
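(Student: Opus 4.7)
The plan is to prove Theorem~\ref{thm:manysubsets} by induction on $m$. The base case $m=2$ follows immediately from Theorem~\ref{thm:Gowers}: if no pair $(x_1, x_2) \in A_1 \times A_2$ satisfies $x_1 x_2 \in A_{12}$, then $|A_1||A_2||A_{12}| \leq n^3/\delta$, contradicting $P_{12} = p_1 p_2 p_{12} \geq f(2)/\delta > 1/\delta$.

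For the inductive step I would set $\lambda = \sqrt{m/f(m)}$ and seek a single $x_1 \in A_1$ making the sets $B_j := A_j \cap x_1^{-1} A_{1j}$ simultaneously large for all $j = 2, \ldots, m$. Applying Lemma~\ref{lemma:Gowers} with $A = A_j$, $B = A_{1j}$, $k = \lambda$, and $t = p_1/m$ (substituting $x = x_1^{-1}$ so that $|A \cap xB| = |A_j \cap x_1^{-1} A_{1j}|$), the required inequality $p_j p_{1j} (p_1/m) \geq 1/(\lambda^2 \delta)$ is equivalent to $P_{1j} \geq f(m)/\delta$, which holds by hypothesis. Hence for each $j$ the number of $x_1 \in G$ with $|B_j| \leq (1-\lambda) p_j p_{1j} n$ is at most $(p_1/m) n$. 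A union bound over $j = 2, \ldots, m$ gives at most $((m-1)/m) p_1 n < |A_1|$ bad choices of $x_1$, so some good $x_1 \in A_1$ exists. For this $x_1$, each $B_j$ has density $q_j \geq (1-\lambda) p_j p_{1j}$.

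I would then apply the inductive hypothesis to the $m-1$ sets $B_2, \ldots, B_m$ together with $\{A_{ij} : 2 \leq i < j \leq m\}$, after relabeling indices downward by one. A routine check of the density product formula shows that the new $(i-1, j-1)$-density product equals $q_i q_j p_{ij} \prod_{k=2}^{i-1} (p_{ki} p_{kj})$, which differs from $P_{ij} = p_i p_j p_{ij} p_{1i} p_{1j} \prod_{k=2}^{i-1}(p_{ki} p_{kj})$ only in replacing $p_i p_j p_{1i} p_{1j}$ with $q_i q_j \geq (1-\lambda)^2 p_i p_j p_{1i} p_{1j}$. Hence each new density product is at least $(1-\lambda)^2 P_{ij} \geq (1 - \sqrt{m/f(m)})^2 f(m)/\delta \geq f(m-1)/\delta$ by the hypothesis on $f$. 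The inductive hypothesis produces $x_2, \ldots, x_m$ satisfying $x_i \in B_i$ (so $x_1 x_i \in A_{1i}$) and $x_i x_j \in A_{ij}$ for $2 \leq i < j \leq m$, completing the step.

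The one subtle point is the choice $t = p_1/m$, which produces the $m$ (rather than $m-1$) under the square root in the hypothesis on $f$. Distributing the allowed fraction of bad $x_1$ evenly across the $m-1$ constraints via this $t$ leaves the strict margin $((m-1)/m) p_1 n < |A_1|$; had we chosen $t = p_1/(m-1)$, the union bound would only yield $p_1 n$ bad choices, not strictly fewer, and no good $x_1$ would be guaranteed. Beyond this, the only real work is the bookkeeping verification that the density products transform by exactly the factor $(1-\lambda)^2$ under the reduction from $m$ to $m-1$, which is automatic once the densities of the $B_j$ absorb the factors $p_{1i} p_{1j}$ that drop out of the $i=1$ terms of the old products.
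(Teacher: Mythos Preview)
Your proposal is correct and follows essentially the same approach as the paper's proof: induction on $m$, with Lemma~\ref{lemma:Gowers} applied for each $j$ with parameters $k=\sqrt{m/f(m)}$ and $t=p_1/m$ to locate a good $x_1\in A_1$, followed by the verification that the reduced density products pick up exactly the factor $(1-\sqrt{m/f(m)})^2$. Your write-up is in fact slightly more explicit than the paper's in spelling out the union bound $((m-1)/m)p_1 n<|A_1|$ and in justifying the base case via Theorem~\ref{thm:Gowers}, but the argument is the same.
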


\begin{proof}

We use induction on $m$. Since all the $(1,j)$-density products are at least $f(m)/\delta$, we have the inequality $1/m \cdot p_1 p_j p_{1j} \geq f(m)/(\delta m)$. Therefore, by Lemma~\ref{lemma:Gowers}, for each $1 < j \leq m$, the number of $x_1$ such that 
\begin{displaymath} |A_j \cap x_1^{-1} A_{1j}| \leq p_j p_{1j} \left(1-\sqrt{\frac{m}{f(m)}}\right) \end{displaymath}
is at most $p_1 n / m$. It follows that there exists $x_1 \in A_1$ such that, if for every $1<j \leq m$ we set $B_j = A_j \cap x_1^{-1} A_{1j}$, then every $B_j$ has density at least 
\begin{displaymath} q_j = p_j p_{1j} \left(1-\sqrt{\frac{m}{f(m)}}\right). \end{displaymath}

In order to use the induction hypothesis, it suffices to prove that for every $1<i<j \leq m$, we have
\begin{displaymath} Q_{ij} = q_i q_j p_{ij} \prod_{k=2}^{i-1} (p_{ki} p_{kj}) \geq \frac{f(m-1)}{\delta}. \end{displaymath}
In fact,
\begin{align*}
Q_{ij} &= \left(1-\sqrt{\frac{m}{f(m)}}\right)^2 \cdot p_i p_j p_{1i} p_{1j} \prod_{k=2}^{i-1} (p_{ki} p_{kj}) \\
&= \left(1-\sqrt{\frac{m}{f(m)}}\right)^2 \cdot p_i p_j \prod_{k=1}^{i-1} (p_{ki} p_{kj}) \\
&\geq \left(1-\sqrt{\frac{m}{f(m)}}\right)^2 \cdot \frac{f(m)}{\delta} \\
&\geq \frac{f(m-1)}{\delta}.
\end{align*}

This proves the inductive step of the theorem. We take $m=2$ as the base case, which is trivial since $f(2) > 1$. 

\end{proof}

For example, if we define $f(m)$ by $f(2) = 2$ and 
\[
f(m) = \frac{(m+1-f(m-1))^2}{4m} \qquad (m \geq 3),
\]
then $0 \leq f(m) \leq m+2$ by induction on $m$, and
\begin{displaymath} \left(1-\sqrt{\frac{m}{f(m)}}\right)^2 f(m) = f(m-1). \end{displaymath}
Consequently, the conclusion of Theorem~\ref{thm:manysubsets} holds if we assume
all the density products are at least $(m+2)/\delta$.

Using the same technique as above, we can derive similar results for arbitrary $\Gamma$. There are two aspects we need to consider. One of them is what density products should be involved in the condition. As mentioned before, the definition of density condition in \cite{Gowers} can be used if we set the densities of the sets we are not going to consider to be $1$. Here we give a formal definition of density products (which is similar in the definition in \cite{Gowers}).

\begin{definition}

Suppose $\Gamma$ is a collection of subsets of $\{1,2,\ldots,m\}$. For each $F \in \Gamma$, $A_F$ is a subset of $G$ with density $p_F$. Let $h$ be an integer less than $m$ and let $E$ be a subset of $\{h+1,\ldots,m\}$. Let $P_{h,E}$ be the collection of all the sets in $\Gamma$ of the form $U \cup V$, where $\max{U} < h$ and $V$ is either $\{h\}$, $E$ or $\{h\}\cup E$. We say that the product $\prod_{F\in P_{h,E}}p_F$ is the $(h,E)$-density product. 

\end{definition}

We also need to determine the lower bound for the density products. This lower bound is generally of the form $f_{\Gamma}(m)/\delta$. The constraint of the function $f_{\Gamma}(m)$ is related to the collection $\Gamma$. Using the same method as in the proof of Theorem \ref{thm:manysubsets}, we get the following theorem for arbitrary collection $\Gamma$.

\begin{theorem}

Let $G$ be a group of order $n$ such that no nontrivial representation has dimension less than $\delta$. Let $\Gamma$ be a collection of subsets of $\{1,2,\ldots,m\}$. For each $F\in \Gamma$, let $A_F$ be a subset of $G$ of density $p_F$. Let $h(m)$ be a positive integer such that there are at most $h(m)$ subsets in $\Gamma$ containing $k$ for any $1\leq k \leq m$. Suppose all the $(h,E)$-density products are at least $f_{\Gamma}(m)/\delta$, where the function $f_{\Gamma}(m)$ satisfies the inequality
\begin{displaymath} \left(1-\sqrt{\frac{h(m)}{f_{\Gamma}(m)}}\right)^2 f_{\Gamma}(m) \geq f_{\Gamma}(m-1) \end{displaymath} 
for any $m=2,3,\ldots$, then there exist elements $x_1,\ldots,x_m$ of $G$ such that $x_F\in A_F$ for every $F\in\Gamma$, where $x_F$ stands for the product of all $x_i$ such that $i\in F$.

\end{theorem}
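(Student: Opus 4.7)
The plan is to imitate the proof of Theorem~\ref{thm:manysubsets}, inducting on $m$: fix $x_1$ appropriately to reduce to an $(m-1)$-variable problem on $\{2,\ldots,m\}$, to which the inductive hypothesis applies. The base case at small $m$ is handled directly from the density product condition.

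For the inductive step, I would first list all sets $F \in \Gamma$ with $1 \in F$; by hypothesis there are at most $h(m)$ of them. For each such $F$ with $F \setminus \{1\} \in \Gamma$, I would apply Lemma~\ref{lemma:Gowers} with $k = \sqrt{h(m)/f_\Gamma(m)}$ and $t = p_{\{1\}}/h(m)$ (or $t = 1/(2h(m))$ if $\{1\}\notin\Gamma$). The $(1, F\setminus\{1\})$-density product condition $\prod_{F'' \in \{\{1\}, F\setminus\{1\}, F\} \cap \Gamma} p_{F''} \geq f_\Gamma(m)/\delta$ is exactly what is needed to verify the hypothesis $rst \geq (k^2\delta)^{-1}$ of Lemma~\ref{lemma:Gowers}, which then bounds the number of $x_1 \in G$ for which $|A_{F\setminus\{1\}} \cap x_1^{-1}A_F| < (1-k)\,p_{F\setminus\{1\}}\,p_F\,n$ by $tn$. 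Union-bounding over the at most $h(m)$ such $F$ yields an $x_1$ (in $A_{\{1\}}$ when $\{1\}\in\Gamma$) that is good for every constraint simultaneously.

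With $x_1$ fixed, I would pass to the reduced problem on $\{2,\ldots,m\}$ with collection $\Gamma' = \{F \setminus \{1\} : F \in \Gamma,\ F \neq \{1\}\}$. For each $F' \in \Gamma'$, the new target set $B_{F'}$ is obtained by intersecting $A_{F'}$ (when $F' \in \Gamma$) with $x_1^{-1} A_{\{1\}\cup F'}$ (when $\{1\}\cup F' \in \Gamma$). By the choice of $x_1$, its density is at least $(1 - \sqrt{h(m)/f_\Gamma(m)})\, p_{F'}\, p_{\{1\}\cup F'}$ when both ``parents'' of $F'$ lie in $\Gamma$, and at least $p_{F'}$ or $p_{\{1\}\cup F'}$ in the one-sided cases. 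A direct calculation then shows that each $(h,E)$-density product of the reduced problem (for $h \geq 2$) equals the original $(h,E)$-density product multiplied by a power of $1 - \sqrt{h(m)/f_\Gamma(m)}$ coming from the merged indices; the recursive inequality on $f_\Gamma$ then yields a lower bound of $f_\Gamma(m-1)/\delta$, so the inductive hypothesis applies.

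The main obstacle will be this final density-product bookkeeping: setting up the correspondence between terms in the new $(h,E)$-density product and terms in the original, counting precisely how many factors of $1 - \sqrt{h(m)/f_\Gamma(m)}$ appear (one per $F' \in P'_{h,E}$ with both $F'$ and $\{1\}\cup F'$ in $\Gamma$), and checking that this exponent is compatible with the squared factor assumed in the hypothesis on $f_\Gamma$. This is the direct analogue of the identity $Q_{ij} = (1 - \sqrt{m/f(m)})^2 P_{ij}$ used at the end of Theorem~\ref{thm:manysubsets}, but the arbitrary structure of $\Gamma$ forces a careful case analysis (both-parents, only one parent, and the $\{1\}\notin\Gamma$ boundary case) that must be handled uniformly.
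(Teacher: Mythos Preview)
Your plan matches the paper's own approach exactly: the paper simply says that ``the proof is very similar to that of Theorem~\ref{thm:manysubsets}'' and gives no further details, so the inductive scheme you describe (fix $x_1$ via a union bound using Lemma~\ref{lemma:Gowers}, replace each $A_{F'}$ by $B_{F'}=A_{F'}\cap x_1^{-1}A_{\{1\}\cup F'}$ as appropriate, and verify the reduced density products) is precisely what is intended.

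The obstacle you flag at the end, however, is a genuine one and not merely bookkeeping. In the reduced $(h,E)$-density product you pick up one factor of $1-\sqrt{h(m)/f_\Gamma(m)}$ for each $F'\in P'_{h,E}$ whose two parents $F'$ and $\{1\}\cup F'$ both lie in $\Gamma$. In Theorem~\ref{thm:manysubsets} this count is always exactly~$2$ (only $F'=\{i\}$ and $F'=\{j\}$ contribute), which is why the squared factor in the recurrence for $f$ suffices there. For a general $\Gamma$ the count need not be bounded by~$2$: already for $m=3$ with $\Gamma$ equal to all nonempty subsets of $\{1,2,3\}$, each of $\{2\},\{3\},\{2,3\}$ has both parents in $\Gamma$, so the reduced product carries a cube of $1-k$ rather than a square. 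In that situation the hypothesis $(1-\sqrt{h(m)/f_\Gamma(m)})^{2} f_\Gamma(m)\ge f_\Gamma(m-1)$ is not strong enough to close the induction along the lines you propose. So the ``careful case analysis'' you anticipate will not go through uniformly under the stated hypothesis; either the exponent~$2$ in the recurrence must be replaced by a $\Gamma$-dependent quantity, or an argument genuinely different from the direct analogue of Theorem~\ref{thm:manysubsets} is required.
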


Since the proof is very similar to that of Theorem \ref{thm:manysubsets}, we will not go into details here.


\begin{thebibliography}{9}
\bibitem{BabaiNikolovPyber}
L.~Babai, N.~Nikolov, and L.~Pyber,
Product growth and mixing in finite groups,
\textit{Proceedings of the Nineteenth Annual ACM-SIAM Symposium on Discrete Algorithms
(SODA '08)}, 2008.

\bibitem{Babai85}
L.~Babai and V.~T. S\'{o}s,
Sidon sets in groups and induced subgraphs of {Cayley} graphs,
{\em Europ. J. Combin.} \textbf{6} (1985), 101--114.

\bibitem{Green05}
B.~Green and I.~Z. Ruzsa,
Sum-free sets in abelian groups,
{\em Israel J. Math} \textbf{147} (2005), 157--189.

\bibitem{Gowers}
W.~T. Gowers, Quasirandom groups, {\em eprint arXiv: 0710.3877} (2007).

\bibitem{Kedlaya97}
K.~S. Kedlaya,
Large product-free subsets of finite groups.
{\em J. Combin. Theory Series A} \textbf{77} (1997), 339--343.

\bibitem{kedlaya2007pfs}
K.S. Kedlaya,
Product-free subsets of groups, then and now,
this volume.

\bibitem{Liebeck96}
M.~W. Liebeck and A.~Shalev.
Simple groups, probabilistic methods, and a conjecture of {Kantor}
  and {Lubotzky},
{\em J. Algebra} \textbf{184} (1996), 31--57.

\end{thebibliography}
\end{document}